\documentclass[12pt]{amsart}       
\usepackage{txfonts}
\usepackage{amssymb}
\usepackage{eucal}
\usepackage{graphicx}
\usepackage{amsmath}
\usepackage{amscd}
\usepackage[all]{xy}           
\usepackage{tikz}
\usepackage{amsfonts,latexsym}
\usepackage{xspace}
\usepackage{epsfig}
\usepackage{float}
\usepackage{color}
\usepackage{fancybox}
\usepackage{colordvi}
\usepackage{multicol}
\usepackage{colordvi}
\usepackage[colorlinks,final,backref=page,hyperindex,hypertex]{hyperref}
\usepackage[active]{srcltx} 

\topmargin -.8cm \textheight 22.8cm \oddsidemargin 0cm \evensidemargin -0cm \textwidth 16.3cm

\newtheorem{theorem}{Theorem}[section]
\theoremstyle{definition}
\newtheorem{defn}[theorem]{Definition}
\newtheorem{lemma}[theorem]{Lemma}
\newtheorem{coro}[theorem]{Corollary}
\newtheorem{prop-def}{Proposition-Definition}[section]
\newtheorem{coro-def}{Corollary-Definition}[section]

\newtheorem{remark}[theorem]{Remark}


\newcommand{\nc}{\newcommand}
\nc{\tred}[1]{\textcolor{red}{#1}}
\nc{\tblue}[1]{\textcolor{blue}{#1}}
\nc{\tgreen}[1]{\textcolor{green}{#1}}
\nc{\tpurple}[1]{\textcolor{purple}{#1}}
\nc{\btred}[1]{\textcolor{red}{\bf #1}}
\nc{\btblue}[1]{\textcolor{blue}{\bf #1}}
\nc{\btgreen}[1]{\textcolor{green}{\bf #1}}
\nc{\btpurple}[1]{\textcolor{purple}{\bf #1}}
\nc{\NN}{{\mathbb N}}
\nc{\ncsha}{{\mbox{\cyr X}^{\mathrm NC}_\lambda}} \nc{\ncshao}{{\mbox{\cyr
X}^{\mathrm NC}_\lambda}}


\newcommand{\efootnote}[1]{}

\renewcommand{\textbf}[1]{}

\newcommand{\delete}[1]{}

\nc{\mlabel}[1]{\label{#1}}  
\nc{\mcite}[1]{\cite{#1}}  
\nc{\mref}[1]{\ref{#1}}  
\nc{\mbibitem}[1]{\bibitem{#1}} 

\delete{
\nc{\mlabel}[1]{\label{#1}  
{\hfill \hspace{1cm}{\small\tt{{\ }\hfill(#1)}}}}
\nc{\mcite}[1]{\cite{#1}{\small{\tt{{\ }(#1)}}}}  
\nc{\mref}[1]{\ref{#1}{{\tt{{\ }(#1)}}}}  
\nc{\mbibitem}[1]{\bibitem[\bf #1]{#1}} 
}


\nc{\opa}{\ast} \nc{\opb}{\odot} \nc{\op}{\bullet} \nc{\pa}{\frakL}
\nc{\arr}{\rightarrow} \nc{\lu}[1]{(#1)} \nc{\mult}{\mrm{mult}}
\nc{\diff}{\mathfrak{Diff}}
\nc{\opc}{\sharp}\nc{\opd}{\natural}
\nc{\ope}{\circ}
\nc{\dpt}{\mathrm{d}}
\nc{\diam}{alternating\xspace}
\nc{\Diam}{Alternating\xspace}
\nc{\cdiam}{alternating\xspace}
\nc{\Cdiam}{Alternating\xspace}
\nc{\AW}{\mathcal{A}}
\nc{\rba}{Rota-Baxter algebra\xspace}

\nc{\ari}{\mathrm{ar}}

\nc{\lef}{\mathrm{lef}}

\nc{\Sh}{\mathrm{ST}}

\nc{\Cr}{\mathrm{Cr}}

\nc{\st}{{Schr\"oder tree}\xspace}
\nc{\sts}{{Schr\"oder trees}\xspace}

\nc{\vertset}{\Omega} 

\nc{\assop}{\quad \begin{picture}(5,5)(0,0)
\line(-1,1){10}
\put(-2.2,-2.2){$\bullet$}
\line(0,-1){10}\line(1,1){10}
\end{picture} \quad \smallskip}

\nc{\operator}{\begin{picture}(5,5)(0,0)
\line(0,-1){6}
\put(-2.6,-1.8){$\bullet$}
\line(0,1){9}
\end{picture}}

\nc{\idx}{\begin{picture}(6,6)(-3,-3)
\put(0,0){\line(0,1){6}}
\put(0,0){\line(0,-1){6}}
 \end{picture}}

\nc{\pb}{{\mathrm{pb}}}
\nc{\Lf}{{\mathrm{Lf}}}

\nc{\lft}{{left tree}\xspace}
\nc{\lfts}{{left trees}\xspace}

\nc{\fat}{{fundamental averaging tree}\xspace}

\nc{\fats}{{fundamental averaging trees}\xspace}
\nc{\avt}{\mathrm{Avt}}

\nc{\rass}{{\mathit{RAss}}}

\nc{\aass}{{\mathit{AAss}}}

\nc{\vin}{{\mathrm Vin}}    
\nc{\lin}{{\mathrm Lin}}    
\nc{\inv}{\mathrm{I}n}
\nc{\gensp}{V} 
\nc{\genbas}{\mathcal{V}} 
\nc{\bvp}{V_P}     
\nc{\gop}{{\,\omega\,}}     

\nc{\bin}[2]{ (_{\stackrel{\scs{#1}}{\scs{#2}}})}  
\nc{\binc}[2]{ \left (\!\! \begin{array}{c} \scs{#1}\\
    \scs{#2} \end{array}\!\! \right )}  
\nc{\bincc}[2]{  \left ( {\scs{#1} \atop
    \vspace{-1cm}\scs{#2}} \right )}  
\nc{\bs}{\bar{S}} \nc{\cosum}{\sqsubset} \nc{\la}{\longrightarrow}
\nc{\rar}{\rightarrow} \nc{\dar}{\downarrow} \nc{\dprod}{**}
\nc{\dap}[1]{\downarrow \rlap{$\scriptstyle{#1}$}}
\nc{\md}{\mathrm{dth}} \nc{\uap}[1]{\uparrow
\rlap{$\scriptstyle{#1}$}} \nc{\defeq}{\stackrel{\rm def}{=}}
\nc{\disp}[1]{\displaystyle{#1}} \nc{\dotcup}{\
\displaystyle{\bigcup^\bullet}\ } \nc{\gzeta}{\bar{\zeta}}
\nc{\hcm}{\ \hat{,}\ } \nc{\hts}{\hat{\otimes}}
\nc{\barot}{{\otimes}} \nc{\free}[1]{\bar{#1}}
\nc{\uni}[1]{\tilde{#1}} \nc{\hcirc}{\hat{\circ}} \nc{\lleft}{[}
\nc{\lright}{]} \nc{\lc}{\lfloor} \nc{\rc}{\rfloor}
\nc{\curlyl}{\left \{ \begin{array}{c} {} \\ {} \end{array}
    \right .  \!\!\!\!\!\!\!}
\nc{\curlyr}{ \!\!\!\!\!\!\!
    \left . \begin{array}{c} {} \\ {} \end{array}
    \right \} }
\nc{\longmid}{\left | \begin{array}{c} {} \\ {} \end{array}
    \right . \!\!\!\!\!\!\!}
\nc{\onetree}{\bullet} \nc{\ora}[1]{\stackrel{#1}{\rar}}
\nc{\ola}[1]{\stackrel{#1}{\la}}
\nc{\ot}{\otimes} \nc{\mot}{{{\boxtimes\,}}}
\nc{\otm}{\overline{\boxtimes}} \nc{\sprod}{\bullet}
\nc{\scs}[1]{\scriptstyle{#1}} \nc{\mrm}[1]{{\rm #1}}
\nc{\margin}[1]{\marginpar{\rm #1}}   
\nc{\dirlim}{\displaystyle{\lim_{\longrightarrow}}\,}
\nc{\invlim}{\displaystyle{\lim_{\longleftarrow}}\,}
\nc{\mvp}{\vspace{0.3cm}} \nc{\tk}{^{(k)}} \nc{\tp}{^\prime}
\nc{\ttp}{^{\prime\prime}} \nc{\svp}{\vspace{2cm}}
\nc{\vp}{\vspace{8cm}} \nc{\proofbegin}{\noindent{\bf Proof: }}
\nc{\proofend}{$\blacksquare$ \vspace{0.3cm}}
\nc{\modg}[1]{\!<\!\!{#1}\!\!>}
\nc{\intg}[1]{F_C(#1)} \nc{\lmodg}{\!
<\!\!} \nc{\rmodg}{\!\!>\!}
\nc{\cpi}{\widehat{\Pi}}
\nc{\sha}{{\mbox{\cyr X}}}  
\nc{\shap}{{\mbox{\cyrs X}}} 
\nc{\shpr}{\diamond}    
\nc{\shp}{\ast} \nc{\shplus}{\shpr^+}
\nc{\shprc}{\shpr_c}    
\nc{\msh}{\ast} \nc{\zprod}{m_0} \nc{\oprod}{m_1}
\nc{\vep}{\varepsilon} \nc{\labs}{\mid\!} \nc{\rabs}{\!\mid}
\nc{\sqmon}[1]{\langle #1\rangle}

\nc{\mmbox}[1]{\mbox{\ #1\ }} \nc{\dep}{\mrm{dep}} \nc{\fp}{\mrm{FP}}
\nc{\rchar}{\mrm{char}} \nc{\End}{\mrm{End}} \nc{\Fil}{\mrm{Fil}}
\nc{\Mor}{Mor\xspace} \nc{\gmzvs}{gMZV\xspace}
\nc{\gmzv}{gMZV\xspace} \nc{\mzv}{MZV\xspace}
\nc{\mzvs}{MZVs\xspace} \nc{\Hom}{\mrm{Hom}} \nc{\id}{\mrm{id}}
\nc{\im}{\mrm{im\,}} \nc{\incl}{\mrm{incl}} \nc{\map}{\mrm{Map}}
\nc{\mchar}{\rm char} \nc{\nz}{\rm NZ} \nc{\supp}{\mathrm Supp}

\nc{\Alg}{\mathbf{Alg}} \nc{\Bax}{\mathbf{Bax}} \nc{\bff}{\mathbf f}
\nc{\bfk}{{\bf k}} \nc{\bfone}{{\bf 1}} \nc{\bfx}{\mathbf x}
\nc{\bfy}{\mathbf y}
\nc{\base}[1]{\bfone^{\otimes ({#1}+1)}} 
\nc{\Cat}{\mathbf{Cat}}

\nc{\detail}{\marginpar{\bf More detail}
    \noindent{\bf Need more detail!}
    \svp}
\nc{\Int}{\mathbf{Int}} \nc{\Mon}{\mathbf{Mon}}
\nc{\rbtm}{{shuffle }} \nc{\rbto}{{Rota-Baxter }}
\nc{\remarks}{\noindent{\bf Remarks: }} \nc{\Rings}{\mathbf{Rings}}
\nc{\Sets}{\mathbf{Sets}} \nc{\wtot}{\widetilde{\odot}}
\nc{\wast}{\widetilde{\ast}} \nc{\bodot}{\bar{\odot}}
\nc{\bast}{\bar{\ast}} \nc{\hodot}[1]{\odot^{#1}}
\nc{\hast}[1]{\ast^{#1}} \nc{\mal}{\mathcal{O}}
\nc{\tet}{\tilde{\ast}} \nc{\teot}{\tilde{\odot}}
\nc{\oex}{\overline{x}} \nc{\oey}{\overline{y}}
\nc{\oez}{\overline{z}} \nc{\oef}{\overline{f}}
\nc{\oea}{\overline{a}} \nc{\oeb}{\overline{b}}
\nc{\weast}[1]{\widetilde{\ast}^{#1}}
\nc{\weodot}[1]{\widetilde{\odot}^{#1}} \nc{\hstar}[1]{\star^{#1}}
\nc{\lae}{\langle} \nc{\rae}{\rangle}
\nc{\lf}{\lfloor}
\nc{\rf}{\rfloor}


\nc{\QQ}{{\mathbb Q}}
\nc{\RR}{{\mathbb R}} \nc{\ZZ}{{\mathbb Z}}


\nc{\cala}{{\mathcal A}} \nc{\calb}{{\mathcal B}}
\nc{\calc}{{\mathcal C}}
\nc{\cald}{{\mathcal D}} \nc{\cale}{{\mathcal E}}
\nc{\calf}{{\mathcal F}} \nc{\calg}{{\mathcal G}}
\nc{\calh}{{\mathcal H}} \nc{\cali}{{\mathcal I}}
\nc{\call}{{\mathcal L}} \nc{\calm}{{\mathcal M}}
\nc{\caln}{{\mathcal N}} \nc{\calo}{{\mathcal O}}
\nc{\calp}{{\mathcal P}} \nc{\calr}{{\mathcal R}}
\nc{\cals}{{\mathcal S}} \nc{\calt}{{\mathcal T}}
\nc{\calu}{{\mathcal U}} \nc{\calw}{{\mathcal W}} \nc{\calk}{{\mathcal K}}
\nc{\calx}{{\mathcal X}} \nc{\CA}{\mathcal{A}}

\nc{\fraka}{{\mathfrak a}} \nc{\frakA}{{\mathfrak A}}
\nc{\frakb}{{\mathfrak b}} \nc{\frakB}{{\mathfrak B}}
\nc{\frakD}{{\mathfrak D}} \nc{\frakF}{\mathfrak{F}}
\nc{\frakf}{{\mathfrak f}} \nc{\frakg}{{\mathfrak g}}
\nc{\frakH}{{\mathfrak H}} \nc{\frakL}{{\mathfrak L}}
\nc{\frakM}{{\mathfrak M}} \nc{\bfrakM}{\overline{\frakM}}
\nc{\frakm}{{\mathfrak m}} \nc{\frakP}{{\mathfrak P}}
\nc{\frakN}{{\mathfrak N}} \nc{\frakp}{{\mathfrak p}}
\nc{\frakS}{{\mathfrak S}} \nc{\frakT}{\mathfrak{T}}
\nc{\frakX}{{\mathfrak X}} \nc{\frakx}{\mathfrak{x}}

\nc{\BS}{\mathbb{S
}}

\font\cyr=wncyr10 \font\cyrs=wncyr7
\nc{\li}[1]{\textcolor{red}{Li:#1}}
\nc{\tian}[1]{\textcolor{blue}{Tianjie: #1}}
\nc{\xing}[1]{\textcolor{purple}{Xing: #1}}

\nc{\ID}{\mathfrak{I}} \nc{\lbar}[1]{\overline{#1}}
\nc{\bre}{{\rm bre}} \nc{\sd}{\cals} \nc{\rb}{\rm RB}
\nc{\A}{\rm angularly decorated\xspace} \nc{\LL}{\rm L}
\nc{\w}{\rm wid} \nc{\arro}[1]{#1} \nc{\oneh}{\mathbf{1}_H}
\nc{\onek}{\mathbf{1}_\bfk} \nc{\onew}{\mathbf{1}}
\nc{\ver}{\rm ver} \nc{\revise}[1]{\textcolor{purple}{#1x}}



\begin{document}

\title[Connected cofiltered coalgebras and Hopf algebras]{A note on connected cofiltered coalgebras, conilpotent coalgebras and Hopf algebras}
%
\author{Xing Gao}
\address{School of Mathematics and Statistics, Key Laboratory of Applied Mathematics and Complex Systems, Lanzhou University, Lanzhou, Gansu 730000, P.\,R. China}
         \email{gaoxing@lzu.edu.cn}

\author{Li Guo}
\address{Department of Mathematics and Computer Science,
         Rutgers University,
         Newark, NJ 07102, USA}
\email{liguo@rutgers.edu}

\date{\today}
\begin{abstract}
This notes gives a proof that a connected coaugmented cofiltered coalgebra is a conilpotent coalgebra and thus a connected coaugmented cofiltered bialgebra is a Hopf algebra. This applies in particular to a connected coaugmented cograded coalgebra and a connected coaugmented cograded bialgebra.
\end{abstract}

\subjclass[2010]{
16W99, 
16T10, 
16T05.  
}

\keywords{Bialgebra, Hopf algebra, Graded, Filtered, Connected\\
\quad Appeared in: {\em Southeast Asian Bulletin of Math} {\bf 43} (2019) 313-321.}

\maketitle

\tableofcontents

\setcounter{section}{0}

\allowdisplaybreaks

\section{Introduction}
\mlabel{sec:intr}

In this note we revisit the notion of a connected filtered coalgebra, and the fact that a connected filtered bialgebras is a Hopf algebra. These notion and result and their graded variation is fundamental in the study of a large number of Hopf algebras, including the Hopf algebra of Feynman graphs in the Connes-Kreimer approach to renormalization of perturbative quantum field theory~\cite{CK}. It is one of the most used methods to obtain Hopf algebras in combinatorics and algebra
and is included in several references. There are variations in the precise meaning of the notion
of connected filtered coalgebra (see for example~\cite{Ma}) and how the connectedness implies the Hopf
property in the literature, such as~\cite{GPZ,HGK,Ma,ZGG}. As noted in~\cite{Gr}, the graded and filtered properties
of a coalgebra should include their compatibility with the counit, as well as the coproduct. Further,
what works for connected graded bialgebras becomes quite subtle for connected filtered bialgebras. Moreover, there is a closely related concept called conilpotency of a bialgebra which also
implies the Hopf algebra property~\cite[1.2.4;1.3.4]{LV}.

By elucidating these points in this note, we present a proof that a coaugmented connected filtered coalgebra is conilpotent. Hence a connected filtered bialgebra is a connected conilpotent bialgebra and hence is a Hopf algebra.

{\bf Convention} Throughout this paper, algebras are associative unitary algebras, and algebras and coalgebras are taken to be
over a commutative unitary ring $\bfk$, as are the linear maps and tensor products.

\section{From cofiltered coalgebras to conilpotent coalgebras}
\mlabel{sec:cofil}

In this section, we show that a connected coaugmented cofiltered coalgebra is a conilpotent coalgebra.

We start with some basic concepts on coalgebras. For further details, see standard references such as~\mcite{Ab,LV,Sw}. We include the notations $\beta_\ell, \beta_r$~\cite{Gub} for later applications.

\begin{defn}
A {\bf coalgebra} is a triple $(C, \Delta, \varepsilon)$ consisting of a $\bfk$-module $C$, and liner maps $\Delta: C\to C\ot C$, called
the {\bf coproduct}, and $\varepsilon:C\to \bfk$, called the {\bf counit}, that make the following diagrams commute:
\begin{equation}
\text{(Coassociativity)} \quad \xymatrix{
  C \ar[r]^{\Delta} \ar[d]_{\Delta} & C\ot C \ar[d]^{\id \ot \Delta} \\
 C\ot C \ar[r]^{\Delta\ot \id } & C\ot C\ot C             }
\mlabel{eq:coass}
\end{equation}
\begin{equation}
\text{(Counicity)} \quad
\xymatrix{
 \bfk \ot C & C\ot C \ar[l]_{\varepsilon\ot \id} \ar[r]^{\id \ot \varepsilon}& C\ot \bfk \\
  &C\ar[u]_{\Delta} \ar[lu]^{\beta_\ell} \ar[ur]_{\beta_r}&
 }
 \mlabel{eq:counit}
\end{equation}
Here $\beta_\ell$ and $\beta_r$ are linear isomorphisms given by
\begin{align*}
\beta_\ell:& C\to \bfk \ot C, \quad c \mapsto 1_\bfk \ot c,\\
\beta_r:&  C \to C \ot \bfk, \quad c \mapsto c\ot 1_\bfk, \,\, k\in \bfk, c\in C.
\end{align*}
\end{defn}

Using Sweedler's notations, we have
$$\Delta(x)=\sum_{(x)}x_{(1)}\ot x_{(2)}.$$
By iteration, we define
$${\Delta}^{k}(x): = (\id \ot {\Delta}^{k-1}){\Delta}(x), k\geq 2$$
and denote
$$ \Delta^k(x)=\sum_{(x)}x_{(1)}\ot \cdots \ot x_{(k+1)}\in C^{\ot (k+1)}.$$

\begin{defn}
A coalgebra $(C, \Delta, \varepsilon)$ is called {\bf coaugmented} if there is a linear map $u:\bfk \to C$, called the {\bf coaugmentation}, such
that $\varepsilon\, u = \id_\bfk$.
\end{defn}

\begin{lemma}
Let $(C,\Delta,\vep, u)$ be a coaugmented coalgebra.
Then
$$C = \im (u\,\vep)\oplus \ker (u\,\vep) = \im u \oplus \ker \vep.$$
\mlabel{lem:sum}
\end{lemma}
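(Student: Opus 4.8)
The plan is to recognize $u\vep$ as an idempotent endomorphism of $C$ and then invoke the standard splitting of a module determined by an idempotent. First I would observe that the coaugmentation axiom $\vep u = \id_\bfk$ immediately forces $u\vep$ to be idempotent: writing $e := u\vep \in \End(C)$, we compute $e\,e = (u\vep)(u\vep) = u(\vep u)\vep = u\,\id_\bfk\,\vep = u\vep = e$. This is the only place the defining relation of the coaugmentation is used, and it is the conceptual heart of the statement.

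Given that $e$ is idempotent, the decomposition $C = \im e \oplus \ker e$ is the textbook one, valid over any ring and hence over our commutative ring $\bfk$. Concretely, for $x\in C$ I would write $x = e(x) + (x - e(x))$, noting $e(x)\in \im e$ while $x-e(x)\in\ker e$ because $e(x-e(x)) = e(x)-e^2(x)=0$; and for the intersection, if $y\in\im e\cap\ker e$ then $y=e(z)$ for some $z$, whence $y = e(z) = e^2(z) = e(y) = 0$. This already yields the first equality $C = \im(u\vep)\oplus\ker(u\vep)$.

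For the second equality I would identify the two summands separately. Since $\vep u = \id_\bfk$, the map $\vep$ admits a right inverse and is therefore surjective, while $u$ admits a left inverse and is injective; consequently $\im(u\vep) = u(\vep(C)) = u(\bfk) = \im u$. For the kernels, the inclusion $\ker\vep\subseteq\ker(u\vep)$ is immediate, and conversely $u\vep(x)=0$ gives $\vep(x) = (\vep u)\vep(x) = \vep\bigl(u\vep(x)\bigr) = 0$, so $\ker(u\vep) = \ker\vep$. Combining these identifications with the first decomposition gives $C = \im u \oplus \ker\vep$. I do not anticipate any genuine obstacle: the argument is elementary and requires no field hypothesis, and the only points demanding a little care are the two image/kernel identifications, which rest solely on the one-sided inverse relation $\vep u = \id_\bfk$ rather than on any further coalgebra structure.
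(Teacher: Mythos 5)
Your proof is correct and follows essentially the same route as the paper's: idempotency of $u\vep$ from $\vep u = \id_\bfk$ gives the splitting $C = \im(u\vep)\oplus\ker(u\vep)$, and then $\im(u\vep)=\im u$ (surjectivity of $\vep$) and $\ker(u\vep)=\ker\vep$ (injectivity of $u$, which you verify directly via the one-sided inverse) yield the second decomposition. You merely spell out the standard idempotent-splitting details that the paper leaves implicit.
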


\begin{proof}
It follows from $\vep u=\id_\bfk$ that $\vep$ is surjective, $u$ is injective and $u\vep$ is idempotent. Then idempotency gives the linear decomposition
$$C=\im (u\,\vep)\oplus \ker (u\,\vep).$$
Further $\im (u\,\vep)=\im u$ since $\vep$ is surjective and $\ker (u\vep)=\ker \vep$ since $u$ is injective. Therefore,
$C=\im u\oplus \ker \vep$.
\end{proof}

\begin{lemma}
Let $(C,\Delta,\vep, u)$ be a coaugmented coalgebra.
Then for  $x\in \ker \varepsilon$,
\begin{equation}
\Delta(x) = x\ot u(1_\bfk) + u(1_\bfk) \ot x + \sum_{(x)} x'\ot x'',
\mlabel{eq:redcop}
\end{equation}
where $ x'\ot x''\in \ker\varepsilon\ot \ker\varepsilon$.
\mlabel{lem:pffil0}
\end{lemma}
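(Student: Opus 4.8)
The plan is to exploit the idempotent $p := u\,\vep$ furnished by the coaugmentation, together with the counit axiom. By Lemma \ref{lem:sum}, $p$ is the projection of $C$ onto $\im u = \bfk\, u(1_\bfk)$ along $\ker \vep$, so that $q := \id - p$ is the complementary projection onto $\ker \vep$. Writing $\id \ot \id = (p+q)\ot(p+q)$ on $C \ot C$ and applying the expansion to $\Delta(x)$ splits $\Delta(x)$ into four summands lying respectively in $\im u \ot \im u$, $\im u \ot \ker \vep$, $\ker \vep \ot \im u$, and $\ker \vep \ot \ker \vep$. Matching these four pieces against the three terms on the right-hand side of \eqref{eq:redcop} will yield the claim.

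First I would record the two consequences of the counit axiom \eqref{eq:counit} in Sweedler form, namely $\sum_{(x)} \vep(x_{(1)})\, x_{(2)} = x$ and $\sum_{(x)} x_{(1)}\, \vep(x_{(2)}) = x$, obtained by composing $\Delta$ with $\beta_\ell^{-1}(\vep \ot \id)$ and with $\beta_r^{-1}(\id \ot \vep)$. Since $p = u\,\vep$ sends $c$ to $\vep(c)\, u(1_\bfk)$, these identities compute the two mixed components directly: $(p \ot \id)\Delta(x) = u(1_\bfk) \ot x$ and $(\id \ot p)\Delta(x) = x \ot u(1_\bfk)$.

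Next I would show that the $\im u \ot \im u$ component vanishes, which is the only step using the hypothesis $x \in \ker \vep$. Indeed $(p \ot p)\Delta(x) = \big(\sum_{(x)} \vep(x_{(1)})\vep(x_{(2)})\big)\, u(1_\bfk)\ot u(1_\bfk)$, and the scalar equals $\vep\big(\sum_{(x)}\vep(x_{(1)})\, x_{(2)}\big) = \vep(x) = 0$. Subtracting this from the previous identities gives $(p \ot q)\Delta(x) = u(1_\bfk)\ot x$ and $(q \ot p)\Delta(x) = x \ot u(1_\bfk)$, while $(q \ot q)\Delta(x) \in \ker\vep \ot \ker\vep$ by construction of $q$; naming this last term $\sum_{(x)} x' \ot x''$ and summing the four components recovers exactly \eqref{eq:redcop}. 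The argument is essentially bookkeeping once the projection picture is in place, so I do not expect a serious obstacle; the only point requiring care is not to misidentify $\im u$ --- one must invoke Lemma \ref{lem:sum} to know that $p = u\,\vep$ is genuinely the projection onto $\im u$ with kernel $\ker \vep$, and remember that $\im u = \bfk\, u(1_\bfk)$, so that the $\im u \ot \im u$ component is indeed a scalar multiple of $u(1_\bfk) \ot u(1_\bfk)$.
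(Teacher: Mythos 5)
Your proof is correct and takes essentially the same approach as the paper: both rest on the four-block decomposition of $C\ot C$ induced by $C=\im u\oplus\ker\vep$ (Lemma~\ref{lem:sum}) and use the two counit triangles to identify the mixed components and to kill the $\im u\ot\im u$ component via $\vep(x)=0$. The only difference is presentational — you compute the blocks directly with the idempotents $p=u\,\vep$ and $q=\id-p$, whereas the paper writes $\Delta(x)$ as a generic element $u(k)\ot u(n)+y\ot u(1_\bfk)+u(1_\bfk)\ot z+\sum_{(x)}x'\ot x''$ and solves for the unknowns.
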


\begin{proof}
By Lemma~\mref{lem:sum}, we have $C = \im u \oplus \ker\varepsilon $ and
$$C \ot C = (\im u \ot \im u) \oplus (\im u \ot  \ker\varepsilon) \oplus  (\ker\varepsilon\ot \im u) \oplus (\ker\varepsilon \ot \ker\varepsilon).$$
Now $\im u = \bfk\, u(1_\bfk)$ and so we can write
\begin{equation}
\Delta(x) = u(k)\ot u(n) + y\ot u(1_\bfk) + u(1_\bfk)\ot z + \sum_{(x)} x'\ot x'',
\mlabel{eq:swee1}
\end{equation}
where $k, n\in \bfk$, $y,z\in \ker\varepsilon$ and $x'\ot x''\in \ker\varepsilon \ot \ker \varepsilon$.

By the commutativity of the left triangle of the counicity property in Eq.~(\mref{eq:counit}), we obtain
\begin{equation*}
\begin{aligned}
 x &=\ \beta^{-1}_\ell (\vep \ot \id_C) \Delta(x) \\
 &=  \beta^{-1}_\ell ( \vep \ot \id_C) \Big(u(k)\ot u(n) + y\ot u(1_\bfk) + u(1_\bfk)\ot z +  \sum_{(x)} x'\ot x''\Big) \\
 &=\ \beta^{-1}_\ell\Big(\varepsilon(u(k)) \ot u(n) + 1_\bfk\ot z\Big)\\
  &= \beta^{-1}_\ell\Big( 1_\bfk \ot k u(n) + 1_\bfk\ot z\Big)\\
  &= ku(n) + z,
 \end{aligned}
\end{equation*}
whence $x-z = ku(n)=u(kn)$. Since $x, z\in \ker\varepsilon$, $u(kn)\in \im u$ and $\im u\cap \ker\varepsilon = 0$,
we have $x - z = 0 = u(kn)$. Hence  $x = z$ and $u(kn)=0$. Then $$u(k)\ot u(n)=u(k)\ot nu(1_\bfk)=u(k)n\ot u(1_\bfk)=u(kn)\ot u(1_\bfk)=0.$$
Similarly, by the commutativity of the right triangle in the counicity property, we get $x = y$. Thus
\begin{equation*}
\Delta(x) = x\ot u(1_\bfk) + u(1_\bfk)\ot x + \sum_{(x)} x'\ot x'',
\end{equation*}
where $x'\ot x''\in \ker\varepsilon \ot \ker \varepsilon$.
\end{proof}

Let $(C,\Delta,\vep, u)$ be a coaugmented coalgebra. By Lemma~\mref{lem:pffil0}, it makes sense to define the {\bf reduced coproduct}
$$\bar{\Delta}: \ker \varepsilon \to \ker \varepsilon \ot \ker \varepsilon, \quad x\mapsto \Delta(x) - x\ot u(1_\bfk) - u(1_\bfk)\ot x\,\, \text{ for all } x\in C.$$

{\em For the remainder of the paper}, we use the shorthand notations
\begin{equation}
\bar{\Delta}(x) = \sum_{(x)}x'\ot x'' = \sum_{(x)}x^{(1)}\ot x^{(2)}.
\mlabel{eq:nobd}
\end{equation}
In general, define $\bar{\Delta}^{k}: = (\id \ot \bar{\Delta}^{k-1})\bar{\Delta}$ for $k\geq 2$. Then
\begin{equation}
\bar{\Delta}^n (x) = \sum_{(x)} x^{(1)} \ot \cdots \ot x^{(n+1)}.
\mlabel{eq:nobd1}
\end{equation}

Now we introduce two kinds of coalgebras, namely cograded coalgebras and cofiltered coalgebras by adapting the usual notions from~\mcite{Gr,LV}.

\begin{defn}
\begin{enumerate}
\item A coalgebra $(C, \Delta, \varepsilon)$ is called {\bf cograded} if there is a grading $C = \bigoplus_{n\geq 0} C^{(n)}$ of $\bfk$-modules that is compatible with the coproduct $\Delta$ \emph{and} the counit $\vep$ in the sense that
$$\Delta(C^{(n)})\subseteq\bigoplus\limits_{p+q=n} C^{(p)}\otimes C^{(q)}\,\text{ for }\, n\geq 0, \quad \ker \vep = \oplus_{n\geq 1} C^{(n)}.$$
Elements  $x$ in $C^{(n)}$ are said to have {\bf degree} $n$, denoted by $\deg(x) = n$. \mlabel{it:pfco1}

\item A coaugmented cograded coalgebra $(C, \Delta, \varepsilon, u)$ is called {\bf connected} if $C^{(0)}=\im u$ (hence $C=\im u\oplus (\oplus_{n\geq 1} C^(n))$). \mlabel{it:pfco3x}
\end{enumerate}
\mlabel{defn:cograd}
\end{defn}

\begin{remark}
As noted in~\mcite{Gr},  it is not true that $C = C^{(0)} \oplus \ker\vep$ and $C = C^{(0)}\oplus (\oplus_{n\geq 1} C^{(n)})$
imply $\ker\vep = \oplus_{n\geq 1} C^{(n)}$, as was often taken for granted in the literature. This is simply because
$C = A \oplus B = A \oplus D$ does not imply $B=D$. So the condition $\bigoplus_{n\geq 1} C^{(n)}\subseteq \ker \vep$ is added in the definition of a cograded coalgebra.
\end{remark}

\begin{lemma}
For a connected coaugmented cograded coalgebra $(C,\Delta,\vep,u)$, we have $\ker \vep =\oplus_{n\geq 1}C^{(n)}$.
\end{lemma}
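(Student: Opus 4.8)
The plan is to combine the decomposition from Lemma~\mref{lem:sum} with the connectedness hypothesis and the grading, and then to upgrade the inclusion $\bigoplus_{n\geq 1} C^{(n)} \subseteq \ker\vep$ (built into the definition of a cograded coalgebra) to a genuine equality by means of a short linear-algebra argument.

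First I would invoke Lemma~\mref{lem:sum} to write $C = \im u \oplus \ker\vep$. By the connectedness assumption we have $C^{(0)} = \im u$, so this reads $C = C^{(0)} \oplus \ker\vep$. On the other hand, the grading itself splits off the degree-zero part, giving $C = C^{(0)} \oplus \left(\bigoplus_{n\geq 1} C^{(n)}\right)$. Thus $C^{(0)}$ is simultaneously a direct complement of $\ker\vep$ and of $\bigoplus_{n\geq 1} C^{(n)}$ inside $C$.

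As the preceding remark stresses, two decompositions $C = A \oplus B = A \oplus D$ sharing a summand $A$ do not by themselves force $B = D$, which is exactly why the assumed inclusion cannot be dispensed with. Since $\bigoplus_{n\geq 1} C^{(n)} \subseteq \ker\vep$ holds by the definition of a cograded coalgebra, I would conclude with the elementary observation that if $C = A \oplus B = A \oplus D$ and $B \subseteq D$, then in fact $B = D$. Concretely: given $d \in \ker\vep$, write $d = a + b$ with $a \in C^{(0)}$ and $b \in \bigoplus_{n\geq 1} C^{(n)}$ via the grading; then $a = d - b \in \ker\vep$, since both $d$ and $b$ lie in $\ker\vep$, so $a \in C^{(0)} \cap \ker\vep = 0$, forcing $d = b \in \bigoplus_{n\geq 1} C^{(n)}$. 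This supplies the reverse inclusion $\ker\vep \subseteq \bigoplus_{n\geq 1} C^{(n)}$, which together with the assumed inclusion yields the claimed equality.

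The only genuinely delicate point is the one flagged in the remark: the equality cannot be read off from the two direct-sum decompositions alone, so the role of the inclusion $\bigoplus_{n\geq 1} C^{(n)} \subseteq \ker\vep$ and of the connectedness (which is what identifies $C^{(0)}$ with $\im u$ and thereby links the grading to Lemma~\mref{lem:sum}) is essential. Every other step is a routine verification.
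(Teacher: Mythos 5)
Your proof is correct and takes essentially the same route as the paper: the paper intersects the graded decomposition $C=\im u\oplus(\oplus_{n\geq 1}C^{(n)})$ with $\ker\vep$ and invokes the modular law (using $\im u\cap\ker\vep=0$ from Lemma~\mref{lem:sum}), and your element-chasing step proving that $C=A\oplus B=A\oplus D$ with $B\subseteq D$ forces $B=D$ is precisely that modular-law application unpacked. No gaps.
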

\begin{proof}
Since $\bigoplus_{n\geq 1}C^{(n)} \subseteq \ker\vep$, by the modular law, we have
$$ \ker \vep =\ker\vep \cap (\im u\oplus (\oplus_{n\geq 1}C^{(n)})) =(\ker \vep\cap \im u)\oplus (\oplus _{n\geq 1}C^{(n)}) =\oplus_{n\geq 1}C^{(n)}.$$
\end{proof}

As a more general concept, we have
\begin{defn}
\begin{enumerate}
\item A coaugmented coalgebra $(C, \Delta, \varepsilon,u)$ is called {\bf cofiltered} if there are $\bfk$-submodules $C^{n}$, $n\geq 0$, of $C$
such that
\begin{enumerate}
\item $C= \bigcup\limits^{\infty}_{n=0}C^{n}$;
\item $C^n \subseteq C^{n+1}$ for $n\geq 0$;
\item $\Delta(C^{n})\subseteq \sum \limits_{p+q=n} C^{p}\otimes C^{q}$ \,\text{ for all } $n\geq 0$;
\item $C^n =\im u \oplus (C^n\cap\ker\varepsilon)$.
\end{enumerate}
Elements  $x\in C^{n}\setminus C^{n-1}$ are said to have {\bf degree} $n$, denoted by $\deg(x)=n$. \mlabel{it:cofila}

\item A coaugmented cofiltered coalgebra $(C,\Delta,\vep,u)$ is called {\bf connected} if $C^{0}=\im u$.
\end{enumerate}
\mlabel{defn:cofil}
\end{defn}

\begin{remark}
A (connected) cograded coalgebra is a (connected) cofiltered coalgebra with the filtration defined by
$$C^{n}=  \bigoplus_{k\leq n} C^{(k)}\,\text{ for }\, n\geq 0.$$
So we will mostly focus on cofiltered coalgebras.
\mlabel{rem:cog2cof}
\end{remark}

Under the connected cofilteration condition, Lemma~\mref{lem:pffil0} can be strengthened:

\begin{lemma}
Let $(C,\Delta,\vep,u)$ be a connected coaugmented cofiltered coalgebra.
Then for  $x\in \ker \varepsilon$,
$$\Delta(x) = x\ot u(1_\bfk) + u(1_\bfk) \ot x + \sum_{(x)} x'\ot x'',$$
where $ x'\ot x''\in \ker\varepsilon\ot \ker\varepsilon$ and $0 <\deg(x') < \deg(x), 0 <\deg(x'') < \deg(x)$.
\mlabel{lem:pffil}
\end{lemma}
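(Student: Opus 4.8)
The plan is to read off the degree bounds directly from the coproduct compatibility (condition (c) of Definition~\mref{defn:cofil}), using Lemma~\mref{lem:pffil0} to isolate the genuinely reduced part of $\Delta(x)$. Write $d := \deg(x)$, so that $x\in C^d$ but $x\notin C^{d-1}$; since $x\in\ker\vep$ is nonzero and $\im u\cap\ker\vep = 0$ by Lemma~\mref{lem:sum}, we have $d\geq 1$. I would introduce the induced filtration on $C\ot C$ by $(C\ot C)^n := \sum_{p+q=n} C^p\ot C^q$, so that condition (c) reads $\Delta(x)\in (C\ot C)^d$. Lemma~\mref{lem:pffil0} already gives $\Delta(x) = x\ot u(1_\bfk) + u(1_\bfk)\ot x + \bar{\Delta}(x)$ with $\bar{\Delta}(x)\in\ker\vep\ot\ker\vep$; since $x\ot u(1_\bfk)\in C^d\ot C^0$ and $u(1_\bfk)\ot x\in C^0\ot C^d$ both lie in $(C\ot C)^d$, subtracting them shows $\bar{\Delta}(x)\in (C\ot C)^d\cap(\ker\vep\ot\ker\vep)$.

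The heart of the argument, and the step I expect to be the main obstacle, is to compute this intersection, namely to prove
$$(C\ot C)^d\cap(\ker\vep\ot\ker\vep) = \sum_{p+q=d}(C^p\cap\ker\vep)\ot(C^q\cap\ker\vep).$$
The inclusion $\supseteq$ is immediate. For $\subseteq$ I would use the projection $\pi := \id_C - u\vep$ of $C$ onto $\ker\vep$ along $\im u$. The crucial observation is that $\pi$ respects the filtration: precisely because condition (d) of Definition~\mref{defn:cofil} asserts $C^n = \im u\oplus(C^n\cap\ker\vep)$, one gets $\pi(C^n) = C^n\cap\ker\vep$. Hence $\pi\ot\pi$ sends each $C^p\ot C^q$ into $(C^p\cap\ker\vep)\ot(C^q\cap\ker\vep)$ while fixing every element of $\ker\vep\ot\ker\vep$; applying $\pi\ot\pi$ to an element of the left-hand side, written as $\sum_{p+q=d} w_{pq}$ with $w_{pq}\in C^p\ot C^q$, yields membership in the right-hand side. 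It is exactly here that the counit-compatibility built into the cofiltration, the point stressed in the Remark following Definition~\mref{defn:cograd}, is indispensable: without (d) the projection need not preserve degrees and the bound would fail.

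Finally I would extract the degrees. Because $C^0\cap\ker\vep = \im u\cap\ker\vep = 0$, every summand with $p=0$ or $q=0$ vanishes, so in fact
$$\bar{\Delta}(x)\in \sum_{\substack{p+q=d\\ p,q\geq 1}}(C^p\cap\ker\vep)\ot(C^q\cap\ker\vep).$$
Expanding $\bar{\Delta}(x) = \sum_{(x)} x'\ot x''$ as a finite sum of elementary tensors drawn from these summands, each factor $x'$ lies in some $C^p\cap\ker\vep$ with $1\leq p\leq d-1$, whence $\deg(x')\leq p < d$; and since $x'$ is a nonzero element of $\ker\vep$ it cannot lie in $C^0 = \im u$, so $\deg(x')\geq 1$. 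The same reasoning applied to $x''$ gives $0<\deg(x')<\deg(x)$ and $0<\deg(x'')<\deg(x)$, as claimed. (When $d=1$ the index set is empty, forcing $\bar{\Delta}(x)=0$, consistent with the asserted bounds.)
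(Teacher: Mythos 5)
Your proof is correct, and it is in fact more careful than the paper's own argument at the one point where care is needed. The paper's proof simply takes the terms $x'\ot x''$ supplied by Lemma~\mref{lem:pffil0} and asserts, citing the coproduct compatibility (c), that ``$x'\in C^p$ and $x''\in C^q$ with $p+q=n$''. Strictly speaking this is a leap: condition (c) yields a decomposition of $\Delta(x)$ adapted to the filtration, while Lemma~\mref{lem:pffil0} yields one adapted to the splitting $C=\im u\oplus\ker\vep$, and these are a priori different presentations of the same element of $C\ot C$, so one must show the reduced part can be rewritten as a sum of elementary tensors that satisfy \emph{both} constraints simultaneously. Your intersection identity $(C\ot C)^d\cap(\ker\vep\ot\ker\vep)=\sum_{p+q=d}(C^p\cap\ker\vep)\ot(C^q\cap\ker\vep)$, proved by applying $\pi\ot\pi$ with $\pi=\id_C-u\,\vep$ and observing that condition (d) of Definition~\mref{defn:cofil} gives $\pi(C^n)=C^n\cap\ker\vep$ while $\pi\ot\pi$ fixes $\ker\vep\ot\ker\vep$, is exactly the missing reconciliation, and it correctly identifies the counit-compatibility axiom (d) as the hypothesis that makes the projection filtration-preserving --- the very point the paper's Remark after Definition~\mref{defn:cograd} flags as often mishandled in the literature. (Implicitly you also use that $\ker\vep$ is a direct summand of $C$, via Lemma~\mref{lem:sum}, so that $\ker\vep\ot\ker\vep$ sits inside $C\ot C$; this is the same fact the paper uses in Lemma~\mref{lem:pffil0}.) Your endgame --- killing the $p=0$ and $q=0$ summands via $C^0\cap\ker\vep=\im u\cap\ker\vep=0$ and reading off $0<\deg(x'),\deg(x'')<\deg(x)$, with the $d=1$ case degenerating to $\bar{\Delta}(x)=0$ --- matches the paper's use of connectedness. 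In short: same skeleton, but you supply a rigorous bridge where the paper's proof silently assumes one, at the cost of a somewhat longer argument.
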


\begin{proof}
By Lemma~\mref{lem:pffil0}, for $x\in C$ we have
\begin{equation*}
\Delta(x) = x\ot u(1_\bfk) + u(1_\bfk)\ot x + \sum_{(x)} x'\ot x'',
\end{equation*}
where $x'\ot x''\in \ker\varepsilon \ot \ker \varepsilon$.

Let $x\in C$ with $\deg(x) = n\geq 1$. Since $x',x''\in \ker \varepsilon$, we have $x',x''\notin C^0$ by $C^0\cap \ker\varepsilon = 0$ and so $\deg(x'), \deg(x'')>0$.
By Definition~\mref{defn:cofil}~(\mref{it:cofila}), we get
$$x'\in C^p\,\text{ and } x''\in C^q\,\text{ with } p+q = n=\deg(x).$$
Using the definition of degree in Definition~\mref{defn:cofil}, we obtain
$$\deg(x')+\deg(x'')\leq p+q = \deg(x)\,\text{ and so }\, \deg(x'), \deg(x'') < \deg(x)$$
by $\deg(x'), \deg(x'')>0$. This completes the proof.
\end{proof}

We next relate these concepts to the conilpotency of  coalgebras~\cite[Section~1.2.4]{LV}. Compare with the related notion in~\cite[\S~5.2]{Mo}.

\begin{defn}
Let $(C, \Delta, \varepsilon,u)$ be a coaugmented coalgebra.
\begin{enumerate}
\item The {\bf coradical filtration} on $C$ is defined by
\begin{align*}
F_0C:= \im\,u \,\text{ and }\, F_n C:= \im\,u\oplus \{x\in \ker \varepsilon \mid \bar{\Delta}^k (x) = 0 \,\text{ for }\, k\geq n \} \, \text{ for }\, n\geq 1.
\end{align*}

\item $C$ is said to be {\bf conilpotent} if the filtration is exhaustive, that is $C = \cup_{n\geq 0} F_n C$. Equivalently, $\ker \vep =\cup_{n\geq 1} \ker \bar{\Delta}^n$.
\end{enumerate}
\mlabel{defn:conil}
\end{defn}

\begin{theorem}
Let $(C,\Delta,\vep,u)$  be a connected coaugmented cofiltered coalgebra. Then $C$ is conilpotent.
\mlabel{thm:cofcon}
\end{theorem}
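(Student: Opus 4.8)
The plan is to show the coradical filtration exhausts $C$ by proving that the iterated reduced coproduct eventually annihilates every element of $\ker\vep$. Precisely, I will establish the implication: if $x\in C^n\cap\ker\vep$, then $\bar{\Delta}^k(x)=0$ for all $k\geq n$. Granting this, since $C=\bigcup_n C^n$ every $x\in\ker\vep$ lies in some $C^N\cap\ker\vep$ and hence satisfies $\bar{\Delta}^k(x)=0$ for $k\geq N$; thus $\ker\vep=\bigcup_{n\geq1}\ker\bar{\Delta}^n$, which is exactly conilpotency in the sense of Definition~\mref{defn:conil}.

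The engine of the argument is Lemma~\mref{lem:pffil}: for $x\in\ker\vep$ the reduced coproduct $\bar{\Delta}(x)=\sum_{(x)}x'\ot x''$ lands in $\ker\vep\ot\ker\vep$ with both factors of strictly smaller, positive degree. I will use the sharper quantitative form visible inside its proof, namely that $\bar{\Delta}(x)$ lies in $\sum_{p+q=n,\,p,q\geq1}(C^p\cap\ker\vep)\ot(C^q\cap\ker\vep)$, so that the two factor-degrees \emph{sum} to at most $\deg(x)$. This additive bound, rather than mere strict decrease, is what lets the induction terminate by a counting estimate.

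To make the Sweedler-notation statement rigorous under iteration, I would introduce the subspace
$$W^n_k:=\sum_{\substack{p_1+\cdots+p_{k+1}\leq n\\ p_i\geq1}}(C^{p_1}\cap\ker\vep)\ot\cdots\ot(C^{p_{k+1}}\cap\ker\vep)\ \subseteq\ (\ker\vep)^{\ot(k+1)},$$
and prove by induction on $k$ that $\bar{\Delta}^k(C^n\cap\ker\vep)\subseteq W^n_k$ for every $n$. The base case $k=1$ is the sharpened Lemma~\mref{lem:pffil} above, where the boundary indices $p_i=0$ drop out because $C^0\cap\ker\vep=\im u\cap\ker\vep=0$. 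For the inductive step I would use the recursion $\bar{\Delta}^{k+1}=(\id\ot\bar{\Delta}^k)\bar{\Delta}$: writing $\bar{\Delta}(x)=\sum a\ot b$ with $a\in C^p\cap\ker\vep$, $b\in C^q\cap\ker\vep$ and $p+q\leq n$, and applying the induction hypothesis to each $b$ to expand $\bar{\Delta}^k(b)\in W^q_k$, the resulting $(k+2)$-fold tensors have total degree at most $p+q\leq n$ with each of the $k+2$ factors of positive degree, which is precisely membership in $W^n_{k+1}$.

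The conclusion is then a pigeonhole observation: $W^n_k=0$ as soon as $k\geq n$, since $\leq n$ cannot be written as a sum of $k+1\geq n+1$ positive integers. Hence $\bar{\Delta}^k(x)=0$ for $x\in C^n\cap\ker\vep$ and $k\geq n$, which is the claim. I expect the only genuine obstacle to be bookkeeping, not conceptual: namely pinning down the correct invariant $W^n_k$ so that the coassociative recursion for $\bar{\Delta}^k$ closes the induction cleanly, and keeping track of the nesting $C^p\subseteq C^{p+1}$ so that the degree sums behave monotonically. Once the additive degree bound of Lemma~\mref{lem:pffil} is secured, the remainder is essentially counting.
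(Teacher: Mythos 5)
Your proof is correct, and it runs on the same engine as the paper's --- the degree-drop property of $\bar{\Delta}$ from Lemma~\mref{lem:pffil} --- but the induction is organized differently. The paper inducts on the filtration degree $n$, proving $\bar{\Delta}^k(C^n)=0$ for all $k\geq n$: for $x\in C^n$ both tensor factors of $\bar{\Delta}(x)$ land in $C^{n-1}$ by strict decrease alone, and the recursion $\bar{\Delta}^k=(\id\ot\bar{\Delta}^{k-1})\bar{\Delta}$ lets the inductive hypothesis be applied to the right factor only. You instead induct on the iteration count $k$, propagating the additive invariant $W^n_k$ and finishing by pigeonhole. Both are valid; what yours buys is a rigorous subspace-level replacement for the Sweedler notation --- your base case $\bar{\Delta}(C^n\cap\ker\vep)\subseteq\sum_{p+q=n,\,p,q\geq1}(C^p\cap\ker\vep)\ot(C^q\cap\ker\vep)$ does follow from $\Delta(C^n)\subseteq\sum_{p+q=n}C^p\ot C^q$ together with condition (d) of Definition~\mref{defn:cofil} and connectedness, by applying $\pi\ot\pi$ where $\pi$ is the projection onto $\ker\vep$ along $\im u$ --- whereas the paper's element-wise bookkeeping with $x'\ot x''$ is more informal. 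One inaccuracy in your commentary: the claim that the additive bound ``rather than mere strict decrease'' is what makes the argument terminate is not right. The paper's induction on $n$ terminates using strict decrease alone, since both factors drop into $C^{n-1}$; your additive bound reaches the same conclusion by counting, at the cost of the heavier invariant $W^n_k$, but it is not needed.
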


\begin{proof}
Let the filtration of $C$ be given by $C = \cup_{n\geq 0} C^{n}.$
We proceed to prove $\bar{\Delta}^k(C^{n}) = 0$ for all $k\geq n\geq 1$ by induction on $n$.
When $n=1$, we have $\bar{\Delta}(x) = 0$ for all $x\in C^{1}$ by Lemma~\mref{lem:pffil0}.
When $n\geq 2$, let $x\in C^n$. Then $\deg(x)\leq n$.
By Lemma~\mref{lem:pffil},
\begin{equation}
\bar{\Delta}(x) =  \sum_{(x)} x'\ot x'',
\mlabel{eq:deltx1}
\end{equation}
where $0<\deg(x'), \deg(x'') < \deg(x) \leq n$. So $x', x''\in C^{n-1}$.
By the induction hypothesis, we have
$$\bar{\Delta}^{k-1}(x'') = 0\,\text{ for all }\, k-1\geq n-1,$$
and so by Eq.~(\mref{eq:deltx1}),
$$\bar{\Delta}^{k}(x): = (\id \ot \bar{\Delta}^{k-1})\bar{\Delta}(x) = \sum_{(x)} x' \ot \bar{\Delta}^{k-1}(x'') = 0 \,\text{ for all }\,  k\geq n.$$
This completes the proof.
\end{proof}

Since every cograded coalgebra naturally gives a cofiltration, we obtain

\begin{coro}
Let $(C,\Delta,\vep,u)$  be a connected coaugmented cograded coalgebra. Then
\begin{enumerate}
\item for $x\in \ker \varepsilon$, we have $$\Delta(x) = x\ot u(1_\bfk) + u(1_\bfk) \ot x + \sum_{(x)} x'\ot x'',$$
where $ x'\ot x''\in \ker\varepsilon\ot \ker\varepsilon$ and $0 <\deg(x'), \deg(x'') < \deg(x)$.

\item $C$ is conilpotent.
\end{enumerate}
\mlabel{coro:cogcon}
\end{coro}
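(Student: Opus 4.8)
The plan is to deduce both parts by transporting the results already established for cofiltered coalgebras along the canonical cofiltration attached to a cograding, as recorded in Remark~\mref{rem:cog2cof}. Explicitly, I would first equip the connected coaugmented cograded coalgebra $(C,\Delta,\vep,u)$ with the cofiltration $C^n := \bigoplus_{k\leq n} C^{(k)}$ for $n\geq 0$ and verify Definition~\mref{defn:cofil}. The inclusions $C^n\subseteq C^{n+1}$ and the exhaustiveness $C=\bigcup_{n\geq 0}C^n$ are clear; compatibility with $\Delta$ follows by summing the graded condition $\Delta(C^{(k)})\subseteq \bigoplus_{p+q=k}C^{(p)}\ot C^{(q)}$ over $k\leq n$; and condition~(d) holds because connectedness gives $C^{(0)}=\im u$ while the preceding lemma gives $\ker\vep=\bigoplus_{k\geq 1}C^{(k)}$, so that $C^n=\im u\oplus\bigoplus_{1\leq k\leq n}C^{(k)}=\im u\oplus(C^n\cap\ker\vep)$. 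Finally $C^0=C^{(0)}=\im u$ makes this cofiltration connected.

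With this structure in place, part (b) is immediate: Theorem~\mref{thm:cofcon} applies verbatim to $(C,\Delta,\vep,u)$ viewed as a connected coaugmented cofiltered coalgebra, and yields conilpotency.

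For part (a) I would give a direct graded argument, which is slightly cleaner than routing through Lemma~\mref{lem:pffil}. It suffices to treat a nonzero homogeneous $x\in C^{(n)}$ with $n\geq 1$, the statement for a general element of $\ker\vep$ being understood degreewise. Lemma~\mref{lem:pffil0} already produces the decomposition $\Delta(x)=x\ot u(1_\bfk)+u(1_\bfk)\ot x+\sum_{(x)}x'\ot x''$ with $x'\ot x''\in\ker\vep\ot\ker\vep$. Here the first two terms lie in $C^{(n)}\ot C^{(0)}$ and $C^{(0)}\ot C^{(n)}$, while the graded compatibility $\Delta(C^{(n)})\subseteq\bigoplus_{p+q=n}C^{(p)}\ot C^{(q)}$ forces the remaining sum to lie in $\bigoplus_{p+q=n}C^{(p)}\ot C^{(q)}$ as well. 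Since each $x',x''\in\ker\vep=\bigoplus_{k\geq 1}C^{(k)}$, only bidegrees with $p,q\geq 1$ can contribute, so after expanding into homogeneous components one may take $\deg(x')=p$ and $\deg(x'')=q$ with $p+q=n=\deg(x)$ and $p,q\geq 1$, giving $0<\deg(x'),\deg(x'')<\deg(x)$ as claimed.

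The argument is essentially bookkeeping, and I do not expect a genuine obstacle. The one point deserving care is the reconciliation of the two notions of degree: for nonzero homogeneous $x\in C^{(n)}$ the cofiltration degree defined by $x\in C^n\setminus C^{n-1}$ coincides with the cograding degree $n$. Keeping this identification straight is exactly what allows the conclusion of Lemma~\mref{lem:pffil} to be read off as the cograded statement of part (a), so one must be explicit about it when passing between the filtered and graded formulations.
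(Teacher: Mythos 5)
Your proposal is correct, and for part (b) it follows the paper's route exactly: the paper proves this corollary in one line, citing Remark~\mref{rem:cog2cof} (the cofiltration $C^n=\bigoplus_{k\leq n}C^{(k)}$), Lemma~\mref{lem:pffil} for (a), and Theorem~\mref{thm:cofcon} for (b). You usefully spell out the verification of Definition~\mref{defn:cofil} that the paper leaves implicit in the remark; in particular your check of condition (d), via connectedness and the lemma $\ker\vep=\bigoplus_{n\geq 1}C^{(n)}$, is exactly where the added counit-compatibility hypothesis of the cograded definition gets used. Where you genuinely diverge is part (a): instead of reading it off from Lemma~\mref{lem:pffil}, you argue directly with homogeneous components --- Lemma~\mref{lem:pffil0} supplies the decomposition, graded compatibility places $\bar{\Delta}(x)$ in $\bigoplus_{p+q=n}C^{(p)}\ot C^{(q)}$, and $\ker\vep\ot\ker\vep=\bigoplus_{p,q\geq 1}C^{(p)}\ot C^{(q)}$ eliminates bidegrees with $p=0$ or $q=0$. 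This is sound over an arbitrary commutative ring $\bfk$ (each $C^{(p)}$ is a direct summand of $C$, so the maps $C^{(p)}\ot C^{(q)}\to C\ot C$ are split injections and $C\ot C$ decomposes internally), and it buys slightly more than the stated corollary: the components $x'$, $x''$ may be taken homogeneous with $\deg(x')+\deg(x'')=\deg(x)$ exactly, whereas routing through the cofiltration as the paper does only yields the strict inequalities. Your flagged reconciliation of the two degree notions --- a nonzero homogeneous $x\in C^{(n)}$ lies in $C^n\setminus C^{n-1}$ by directness of the grading --- is indeed the one point requiring care, and your degreewise reduction of the general case of (a) to homogeneous $x$ is legitimate because for $x\in\ker\vep$ the degree-zero component vanishes, so every homogeneous component of $x$ is itself in $\ker\vep$ and contributes terms of degree strictly below the top degree of $x$.
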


\begin{proof}
It follows from Lemma~\mref{lem:pffil}, Theorem~\mref{thm:cofcon} and Remark~\mref{rem:cog2cof}.
\end{proof}

\section{From connected bialgebras to Hopf algebras}
In this section, we prove that connected cograded bialgebras and connected cofiltered bialgebras are Hopf algebras.

\begin{defn}
A {\bf bialgebra} is a quintuple $(H, m,u, \Delta, \varepsilon)$ where $(H, m, u)$ is an algebra
and $(H, \Delta, \varepsilon)$ is a coalgebra such that $m: H\ot H\to H$ and $u:\bfk \to H$
are morphisms of coalgebras.
\end{defn}

\begin{remark}
The conditions that $m$ and $u$ are morphisms of coalgebras can be equivalent replaced by the
conditions that $\Delta:H \to H\ot H$ and $\varepsilon: H\to \bfk$ are morphisms of algebras.
\end{remark}

Since $H$ is a bialgebra, we have $\vep u=\id_{\bfk}$. Hence the unit $u:\bfk \to H$ gives a coaugmentation of the coalgebra $(H,\Delta,\vep)$. Thus Lemma~\mref{lem:sum} applies.

For an algebra $A$ and a coalgebra $C$, the {\bf convolution} of two linear maps $f,g$ in $\Hom(C, A)$ is defined to be the
map $f\ast g$ given by the composition
$$ C \overset{\Delta}{\to}  C\ot C \overset{f\ot g}{\to} A\ot A \overset{m}{\to} A.$$
In other words,
$$(f\ast g)(a) = \sum_{(a)} f(a_{(1)}) g(a_{(2)})\,\text{ for } a\in A.$$

\begin{defn}
Let $(H,m,u,\Delta, \varepsilon)$ be a bialgebra. A linear endomorphism $S$ of $H$ is called an {\bf antipode}
for $H$ if it is the inverse of $\id_H$ under the convolution product: $$S\ast \id_H = \id_H\ast S = u\varepsilon.$$
A {\bf Hopf algebra} is a bialgebra $H$ with an antipode $S$.
\end{defn}

The following is the main theorem of this paper. Note that the grading and filtration are required to be compatible with the coproduct $\Delta$ and the counit $\vep$, not with the product $m$ or the unit $u$. So the result can be applied to a bialgbera with a grading or filtration which is not necessarily compatible with the product.

\begin{theorem}
Let $(H,m,u,\Delta,\vep)$  be a bialgebra such that $(H,\Delta,\vep,u)$ is a connected coaugmented cofiltered (in particular cograded) coalgebra.
Then $H$ is a Hopf algebra and the antipode $S$ is given by
\begin{equation}
S(\oneh) = \oneh\,\text{ and }\, S(x) = -x + \sum_{n\geq 1} (-1)^{n+1} m^n \bar{\Delta}^{n}(x)\,\text{ for }\, x\in \ker\varepsilon.
\mlabel{eq:hypos1}
\end{equation}
\mlabel{thm:filhop}
\end{theorem}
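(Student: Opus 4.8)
The plan is to realize the antipode as the convolution inverse of $\id_H$ in the convolution algebra $(\End(H),\ast,u\vep)$, using conilpotency (Theorem~\mref{thm:cofcon}) to guarantee that this inverse exists and is, on each element, a finite sum. Throughout write $\oneh=u(1_\bfk)$. Recall $H=\im u\oplus\ker\vep$ (Lemma~\mref{lem:sum}), that $u\vep$ is the unit of the convolution algebra, and that $\Delta(\oneh)=\oneh\ot\oneh$ since $u$ is a morphism of coalgebras, so $\oneh$ is group-like.

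First I would construct $S$ recursively and verify it is a right convolution inverse. Set $S(\oneh)=\oneh$ and, for $x\in\ker\vep$, define by induction on $\deg(x)$
\[
S(x) := -x - \sum_{(x)} x'\, S(x'').
\]
This is legitimate: Lemma~\mref{lem:pffil} guarantees that each $x''$ lies in $\ker\vep$ with $\deg(x'')<\deg(x)$, so the recursion terminates at $\deg(x)=1$, where $\bar\Delta(x)=0$ forces $S(x)=-x$. Unwinding the defining relation via $\Delta(x)=x\ot\oneh+\oneh\ot x+\bar\Delta(x)$ gives, for $x\in\ker\vep$, $(\id_H\ast S)(x)=x\,S(\oneh)+\oneh\,S(x)+\sum_{(x)}x'S(x'')=x+S(x)+\sum_{(x)}x'S(x'')=0=(u\vep)(x)$, while on $\im u$ one checks $(\id_H\ast S)(\oneh)=\oneh=(u\vep)(\oneh)$. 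Hence $\id_H\ast S=u\vep$.

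Next I would upgrade this right inverse to a genuine two-sided antipode. The standard device is the mirror recursion $S'(\oneh)=\oneh$, $S'(x):=-x-\sum_{(x)}S'(x')\,x''$, which by the same degree induction satisfies $S'\ast\id_H=u\vep$; associativity of convolution then yields $S'=S'\ast(\id_H\ast S)=(S'\ast\id_H)\ast S=S$, so $S$ is simultaneously a left inverse. Thus $S$ is an antipode and $H$ is a Hopf algebra.

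Finally I would extract the closed formula~(\mref{eq:hypos1}). Writing $J:=\id_H-u\vep$, the projection onto $\ker\vep$, the construction above identifies $S$ with the convolution inverse of $\id_H=u\vep+J$, namely the locally finite series $\sum_{n\geq0}(-1)^n J^{\ast n}$; the local finiteness is exactly conilpotency, since for $x\in\ker\vep$ Theorem~\mref{thm:cofcon} supplies $N$ with $\bar\Delta^k(x)=0$ for $k\geq N$. The main computational step is the identity $J^{\ast n}(x)=m^{\,n-1}\bar\Delta^{\,n-1}(x)$ for $x\in\ker\vep$, which I would prove by induction on $n$ from the relation $(J\ot J)\Delta=\bar\Delta$ on $\ker\vep$ together with coassociativity; substituting this and reindexing $k=n-1$ turns $\sum_{n\geq0}(-1)^n J^{\ast n}(x)$ into $-x+\sum_{k\geq1}(-1)^{k+1}m^k\bar\Delta^k(x)$. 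I expect this last bookkeeping—tracking how $J^{\ot n}$ annihilates every tensor slot landing in $\im u$ so that the iterated coproduct collapses to the iterated reduced coproduct—to be the main obstacle, as it is where coassociativity and the degree-drop of Lemma~\mref{lem:pffil} must be combined carefully.
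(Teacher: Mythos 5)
Your proposal is correct, but it takes a more self-contained route than the paper, whose entire proof of Theorem~\mref{thm:filhop} is a citation: the paper observes from \cite[p.~20]{LV} that a conilpotent bialgebra automatically has an antipode given by Eq.~(\mref{eq:hypos1}), and then simply invokes Theorem~\mref{thm:cofcon} and Corollary~\mref{coro:cogcon}. You instead reprove the cited implication from scratch: the recursive definition $S(x)=-x-\sum_{(x)}x'S(x'')$ (well-founded by the degree drop in Lemma~\mref{lem:pffil}), the mirror recursion plus associativity of convolution to upgrade to a two-sided inverse, and the identification $S=\sum_{n\geq 0}(-1)^n J^{\ast n}$ with $J=\id_H-u\vep$, via $J^{\ast n}(x)=m^{n-1}\bar{\Delta}^{n-1}(x)$ on $\ker\vep$ and pointwise finiteness from conilpotency. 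This is exactly the Loday--Vallette argument, and your final bookkeeping step essentially duplicates the computation the paper performs in the Remark after the theorem, where the closed formula~(\mref{eq:hypos1}) is shown by induction on degree to agree with the recursion. What each buys: the paper's proof is brief and cleanly isolates the note's actual new content (connected cofiltered $\Rightarrow$ conilpotent, Theorem~\mref{thm:cofcon}) from the known implication conilpotent $\Rightarrow$ Hopf, while yours makes the whole chain verifiable in place and shows exactly where connectedness enters (termination of the recursion at degree $1$, where $\bar{\Delta}(x)=0$, and vanishing of $J^{\ast n}(x)$ for large $n$). One refinement worth making: since the paper works over an arbitrary commutative ring $\bfk$, the recursive definition of $S$ needs a word on well-definedness and linearity --- one should define $S$ inductively on the submodules $C^n\cap\ker\vep$ as $-\id - m(\id\ot S)\bar{\Delta}$, noting that $\bar{\Delta}(C^n\cap\ker\vep)$ has second tensor legs in $C^{n-1}\cap\ker\vep$ --- whereas your closed series, being built from globally defined convolution powers of the single linear map $J$, sidesteps this issue entirely; you could therefore take the series as the definition of $S$, verify $\id_H\ast S=S\ast\id_H=u\vep$ by the telescoping geometric-series identity, and drop the recursion altogether.
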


\begin{proof}
Observe from~\cite[p.~20]{LV} that a conilpotent bialgebra automatically has an antipode $S$ given by Eq.~(\mref{eq:hypos1}).
Then the result follows from Theorem~\mref{thm:cofcon} and Corollary~\mref{coro:cogcon}.
\end{proof}

Recall our aforementioned notations for $\bar{\Delta}^n (x)$ in Eqs.~(\mref{eq:nobd}) and~(\mref{eq:nobd1}).

\begin{remark}
The formula
for the antipode in Eq.~(\mref{eq:hypos1}) can be restated as
\begin{equation*}
S(\oneh) = \oneh\,\text{ and }\, S(x) = -x + \sum_{n\geq 1} \sum_{(x)} (-1)^{n+1} x^{(1)} \cdots x^{(n+1)} \,\text{ for }\, x\in \ker\varepsilon,
\end{equation*}
which coincides with the following recursive formula on degree:
\begin{equation*}
S(\oneh) = \oneh\,\text{ and }\, S(x) = -x - \sum_{(x)} S(x') x'' = -x - \sum_{(x)} x' S(x'')  \,\text{ for }\, x\in \ker\varepsilon.
\end{equation*}
Here $ x'\ot x''\in \ker\varepsilon\ot \ker\varepsilon$ and $0<\deg(x'), \deg(x'') < \deg(x)$ by Corollary~\mref{coro:cogcon} (resp. Lemma~\mref{lem:pffil}).
This coincidence can be seen by induction on degree.
Obviously, they agree on the initial step,
that is $S(\oneh) = \oneh$. For the induction step,
\begin{align*}
-x - \sum_{(x)} S(x') x'' =& -x - \sum_{(x)}   \Big( -x' + \sum_{n\geq 1}\sum_{(x')} (-1)^{n+1}  x'^{(1)} \cdots x'^{(n+1)} \Big) x''\\
=&  -x + \sum_{(x)} \Big( (-1)^2x'x''   +\sum_{n\geq 1} \sum_{(x')} (-1)^{n+2}  x'^{(1)} \cdots x'^{(n+1)} x'' \Big)\\
=&  -x + \sum_{(x)} \Big( (-1)^2x'x''   +\sum_{n\geq 1} (-1)^{n+2}  x^{(1)} \cdots x^{(n+2)}\Big)\\
=&  -x + \sum_{(x)} \Big( (-1)^2  x^{(1)}  x^{(2)}   +\sum_{n\geq 2} (-1)^{n+1}  x^{(1)} \cdots x^{(n+1)}\Big)\\
=&  -x + \sum_{(x)} \sum_{n\geq 1} (-1)^{n+1}  x^{(1)} \cdots x^{(n+1)}\\
=&  -x + \sum_{n\geq 1}\sum_{(x)} (-1)^{n+1}  x^{(1)} \cdots x^{(n+1)},
\end{align*}
as required. Similarly, we can show the case of $ S(x) =-x - \sum_{(x)} x' S(x'')$.
\end{remark}

\smallskip

\noindent {\bf Acknowledgements}: This work was supported by the National Natural Science Foundation of China (Grant No. 11771190), Fundamental Research Funds for the Central Universities (No.~lzujbky-2017-162), the NSF of Gansu Province (No.~17JR5RA175) and NSF of Shandong Province (No. ZR2016AM02).

\end{document}